\theoremstyle{plain}
\newtheorem{theorem}{Theorem}[section]
\newtheorem{proposition}[theorem]{Proposition}
\newtheorem{corollary}[theorem]{Corollary}
\theoremstyle{definition}
\newtheorem{definition}[theorem]{Definition}
\newtheorem{notation}[theorem]{Notation}
\theoremstyle{remark}
\newtheorem{remark}[theorem]{Remark}
\newcommand{\ZZ}{\mathbb{Z}}
\newcommand{\QQ}{\mathbb{Q}}
\newcommand{\CC}{\mathbb{C}}
\newcommand{\HH}{\mathbb{H}}
\begin{document}

\title{Quasi-modularity of Generalized sum-of-divisors functions}

\author{Simon C. F. Rose}
\address{Max-Planck-Institut f\"ur Mathematik}
\email{simon@mpim-bonn.mpg.de}

\begin{abstract}
In 1919, P. A. MacMahon studied generating functions for generalized divisor sums. In this paper, we provide a framework in which to view these generating functions in terms of Jacobi forms, and prove that they are quasi-modular forms.
\end{abstract}

\maketitle

\section{Introduction}

In \cite{andrews_rose}, a relationship between certain generating functions studied by P. A. MacMahon \cite{macmahon} and Chebyshev polynomials is investigated. MacMahon introduces the following functions
\begin{gather*}
A_k(q) = \sum_{0 < m_1 < \cdots < m_k} \frac{q^{m_1 + \cdots + m_k}}{(1 - q^{m_1})^2 \cdots (1 - q^{m_k})^2}
\\
C_k(q) = \sum_{\substack{
0 < m_1 < \cdots < m_k \\
m_i \equiv 1 \pmod 2
}}
\frac{q^{m_1 + \cdots + m_k}}{(1 - q^{m_1})^2 \cdots (1 - q^{m_k})^2}
\end{gather*}
where we have a slightly different convention than used in \cite{macmahon} for the function \(C_k(q)\) (that is, we have written the summation in terms of congruences mod 2) which will be more suited to the results in this paper.

These are generating functions for a generalized sum-of-divisors functions in the following sense. If we write
\[
A_k(q) = \sum_{m=1}^\infty a_{m,k}q^m
\]
then the coefficients \(a_{m,k}\) are given by
\[
a_{m,k} = \sum s_1 \cdots s_k
\]
where the sum is taken over all ways of writing \(m = s_1m_1 + \cdots + s_km_k\) with the restriction that \(0 < m_1 < \cdots < m_k\). Similarly, if we write
\[
C_k(q) = \sum_{m=1}^\infty c_{m,k}q^m
\]
then the coefficients \(c_{m,k}\) are given by the sum
\[
c_{m,k} = \sum s_1 \cdots s_k
\]
where the sum is instead taken over all ways of writing \(m = s_1m_1 + \cdots + s_km_k\) with \(0 < m_1 < \cdots < m_k\) and where
\begin{equation}\label{eq_congruence}
m_i \equiv 1 \pmod 2.
\end{equation}

In \cite{andrews_rose} it is proven that these generating functions are related to Chebyshev polynomials in the following way.

\begin{definition}
We define the \(n\)-th Chebyshev polynomial (of the first kind) to be the unique polynomial \(T_n(x)\) such that
\[
T_n(\cos\theta) = \cos (n\theta).
\]
The first few of these are given as follows.
\begin{align*}
T_1(x) &= x & T_2(x) &= 2x^2 - 1 \\
T_3(x) &= 4x^3 - 3x & T_4(x) &= 8x^4 - 8x^2 + 1
\end{align*}
\end{definition}

\begin{theorem}\label{thm_old}
We have the following equalities of two-variable generating functions
\begin{equation}\label{eq_rose_andrews_thm}
\begin{split}
2\sum_{n=0}^\infty T_{2n+1}(\tfrac{1}{2}x)q^{n + 1 \choose 2} = (q; q)^3_\infty \sum_{k=0}^\infty A_k(q)x^{2k+1}, \\
1 + 2 \sum_{n=1}^\infty T_{2n}(\tfrac{1}{2}x)q^{n^2} = (q^2;q^2)_\infty(q;q^2)_\infty^2 \sum_{k=0}^\infty C_k(q)x^{2k}
\end{split}
\end{equation}
where
\[
(a;q)_\infty = \prod_{m=0}^\infty (1 - aq^m)
\]
is the \(q\)-Pochhammer symbol.
\end{theorem}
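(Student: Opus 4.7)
The plan is to apply the Jacobi triple product to both left-hand sides to obtain infinite products in $y$, and then to collapse the $y$-dependence into a polynomial in $x$ matching MacMahon's series.

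First, setting $x = y + y^{-1}$ gives $2T_n(x/2) = y^n + y^{-n}$, and each one-sided Chebyshev sum folds into a bilateral sum under the symmetries $n \mapsto -n - 1$ (which preserves $n(n+1)/2$) and $n \mapsto -n$ (which preserves $n^2$), yielding
\[
\sum_{n\in\ZZ} y^{2n+1} q^{n(n+1)/2} \qquad \text{and} \qquad \sum_{n\in\ZZ} y^{2n} q^{n^2}.
\]
The Jacobi triple product in the form $\sum_n z^n q^{n(n-1)/2} = (q;q)_\infty (-z;q)_\infty (-q/z;q)_\infty$, specialized at $z = y^2 q$ (and multiplied by $y$), converts the first sum into $x (q;q)_\infty \prod_{m \geq 1}(1+y^2 q^m)(1+y^{-2} q^m)$; the classical product expansion of $\sum_n y^{2n} q^{n^2}$ gives the second sum as $(q^2;q^2)_\infty \prod_{m \geq 1}(1+y^2 q^{2m-1})(1+y^{-2} q^{2m-1})$.

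The crucial algebraic simplification is the elementary identity
\[
(1 + y^2 q^m)(1 + y^{-2} q^m) = (1 - q^m)^2 + x^2 q^m,
\]
which follows from $y^2 + y^{-2} = x^2 - 2$. Factoring $(1-q^m)^2$ out of each factor produces an extra $(q;q)_\infty^2$ in the first product (respectively $(q;q^2)_\infty^2$ in the second), yielding precisely the Pochhammer prefactors $(q;q)_\infty^3$ and $(q^2;q^2)_\infty (q;q^2)_\infty^2$ appearing on the right-hand sides of \eqref{eq_rose_andrews_thm}. After cancelling these, each identity reduces to
\[
\prod_{m}\Bigl(1 + \frac{x^2 q^m}{(1-q^m)^2}\Bigr) = \sum_{k \ge 0} A_k(q)\, x^{2k} \quad\text{(resp.\ } C_k(q)\text{)},
\]
with $m$ ranging over the positive integers (respectively the positive odd integers). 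Finally, expanding the product in powers of $x^2$ and reading off the coefficient of $x^{2k}$ recovers MacMahon's defining sum over $0 < m_1 < \cdots < m_k$, completing both identities. The only nontrivial point is spotting the quadratic identity that bridges the theta-function side (in $y$) and the MacMahon side (in $x$); the remainder is routine manipulation of Pochhammer symbols.
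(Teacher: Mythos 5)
Your proof is correct, and it is essentially the argument this paper uses for the generalization (Theorem \ref{thm_main_A}), which it states closely follows the proof of Theorem \ref{thm_old} in the cited work: the same two ingredients appear, namely the Jacobi triple product and the quadratic identity $(1+y^{2}q^{m})(1+y^{-2}q^{m})=(1-q^{m})^{2}+x^{2}q^{m}$ followed by the elementary-symmetric-function expansion of the resulting product. The only cosmetic difference is that you run the computation from the Chebyshev/theta side toward MacMahon's product, whereas the paper starts from the product $\prod_{m}\bigl(1-\tfrac{x^{2}q^{m}}{(1-q^{m})^{2}}\bigr)$ and applies the triple product to reach the theta functions.
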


\begin{remark}
It should be noted that this again uses a slightly different convention than in \cite{andrews_rose} which is more suited to this paper. This follows from the equality
\[
(q^2;q^2)_\infty(q;q^2)_\infty^2 = \frac{(q;q)_\infty}{(-q;q)_\infty}.
\]
\end{remark}

In MacMahon's paper there is a plethora of other functions studied which essentially arise from varying the congruence condition \eqref{eq_congruence}. The goal of this paper is to generalize the equalities \eqref{eq_rose_andrews_thm} to include MacMahon's other functions. In this more general setting, certain Jacobi forms play a central role.


We will now provide a definition of the functions of interest.

\begin{definition}
Fix an integer \(n\). We say that a set of representatives (of congruence classes mod \(n\)) \(S \subset \{1, \ldots, n\}\) is {\em symmetric} if, for all \(\ell \in S\) we also have that \(- \ell \in S \pmod n\). Note that if \(n \in S\), then this is vacuously true. Note further that we are considering elements of \(S\) to be integers first and foremost, and that all statements such as \(m \in S \pmod n\) simply mean that \(m\) is congruent to some element in \(S \pmod n\).

If \(n \notin S\), then we will write \(S = \{\ell_1, \ldots, \ell_s\}\) with \(s = |S|\). If \(n \in S\), then we will write \(S = \{n = \ell_0, \ell_1, \ldots, \ell_s\}\). If we do not differentiate between the two cases, we will simply index our representatives as \(\ell \in S\).
\end{definition}

\begin{definition}
Fix an integer \(n\), and choose a symmetric set of representatives \(S\). We then define
\[
A_{S, n, k}(q) = \sum_{\substack{0 < m_1 < \cdots < m_k \\ m_i \in S \pmod n}} \frac{q^{m_1 + \cdots + m_k}}{(1 - q^{m_1})^2 \cdots (1 - q^{m_k})^2}.
\]
\end{definition}

If we expand out \(A_{S, n, k}(q)\) as the \(q\)-series \(A_{S, n, k}(q) = \sum_{m = 0}^\infty a_{S,n,k,m}q^m\) then the coefficients \(a_{S, n, k, m}\) are given by
\[
a_{S, n, k, m} = \sum s_1 \cdots s_k
\]
where we sum over all ways of writing \(m = m_1s_1 + \cdots + m_ks_k\) with \(0 < m_1 < \cdots < m_k\) and with each \(m_i \in S \pmod n\).

In fact, we have the following other specializations (see \cite{macmahon} for the definitions of the generating functions \(E_k(q)\) and \(G_k(q)\)):
\begin{gather*}
A_{\{1\},1,k}(q) = A_k(q), \\
A_{\{1\}, 2, k}(q) = C_k(q), \\
A_{\{1, 4\}, 5, k}(q) = E_k(q), \\
A_{\{2,3\},5,k}(q) = G_k(q),
\end{gather*}
and we also have that
\begin{gather*}
A_{\{n\},n,k}(q) = A_k(q^n), \\
A_{\{1,\ldots, n\}, n, k}(q) = A_k(q).
\end{gather*}

We will also look at the following generating functions, which provide a generalization of the functions \(B_k(q), D_k(q), F_k(q)\), and \(H_k(q)\) of \cite{macmahon}. We will not focus so much on these functions throughout this paper, although most of our results hold similarly for this with the appropriate minor changes.

\begin{definition}
Fix an integer \(n\) and a symmetric set of representatives \(S\). Define
\[
B_{S, n, k}(q) = \sum_{\substack{0 < m_1 < \cdots < m_k \\ m_i \in S \pmod n}} \frac{q^{m_1 + \cdots + m_k}}{(1 + q^{m_1})^2 \cdots (1 + q^{m_k})^2}.
\]
\end{definition}

Lastly, we need the following notationally convenient function.

\begin{definition}
For \(1 \leq \ell \leq n\), define
\[
\alpha(n, \ell) = \frac{\ell}{n} - \frac{1}{2}.
\]
\end{definition}

Note in particular that \(\alpha(n, \ell) = - \alpha(n, n - \ell)\), which relates to the requirement that a set of representatives \(S\) be symmetric.

Finally, let us define the following functions which will be of use.

\begin{definition}\label{def_theta}
Let \(r \in \QQ\), and let \(t\) be a non-negative integer. Then we define
\[
\vartheta_r(q, z) = \sum_{m \in \ZZ + r} q^{\frac{1}{2}m^2} z^m
\]
(where we consider \(q = e^{2\pi i \tau}\) and \(z = e^{2\pi i \sigma}\)). Furthermore, we define
\[
\vartheta_r^{(t)}(q) = \Big[\Big(z\frac{\partial}{\partial z}\Big)^t \vartheta\Big](q, -1) = \sum_{m \in \ZZ + r} e^{\pi i m} m^t q^{\frac{1}{2}m^2}
\]
and
\[
\widetilde{\vartheta}_r^{(t)}(q) = \Big[\Big(z\frac{\partial}{\partial z}\Big)^t \vartheta\Big](q, 1) = \sum_{m \in \ZZ + r} m^t q^{\frac{1}{2}m^2}.
\]
If \(t = 0\), then we omit it from the notation.

Finally, we will define the Dedekind \(\eta\)-function to be
\[
\eta(q) = q^{\frac{1}{24}}\prod_{m=1}^\infty (1 - q^m).
\]
\end{definition}

\begin{theorem}\label{thm_main_A}
Fix a non-negative integer \(n\) and a symmetric set of representatives \(S\). Then we have that
\begin{equation}\label{eq_thm_main_A}
\begin{aligned}
\sum_{k=0}^\infty (-1)^k A_{S, n, k}(q) x^{2k} &= \prod_{j=1}^s\frac{\vartheta_{\alpha(n, \ell_j)}(q^n, -z)}{\vartheta_{\alpha(n, \ell_j)}(q^n)} & n \notin S \\
\sum_{k=0}^\infty (-1)^k A_{S, n, k}(q) x^{2k + 1} &= -i\frac{\prod_{j=0}^s\vartheta_{\alpha(n, \ell_j)}(q^n, -z)}{\eta(q^n)^3\prod_{j=1}^s\vartheta_{\alpha(n, \ell_j)}(q^n)} &n \in S
\end{aligned}
\end{equation}
where \(x = z^{1/2} - z^{-1/2}\).
\end{theorem}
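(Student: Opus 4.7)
The plan is to expand each theta on the right via the Jacobi triple product, simplify the resulting ratio by invoking the symmetry of $S$, and match the final product against the left-hand side via the elementary identity $(1 - zq^m)(1 - z^{-1}q^m) = (1-q^m)^2 - x^2 q^m$, which is just a rewriting of $x^2 = z + z^{-1} - 2$. Because $\prod_m\bigl(1 - x^2 q^m/(1-q^m)^2\bigr)$ expands combinatorially into exactly the generating function defining $A_{S,n,k}$, once the right-hand side is reduced to such a product the identity falls out immediately.

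Concretely, applying the triple product to $\vartheta_r(q,z) = z^r q^{r^2/2}\sum_k q^{k^2/2}(zq^r)^k$ at $r = \alpha(n,\ell)$ and with $q$ replaced by $q^n$ gives, for $1 \leq \ell \leq n-1$,
\[
\vartheta_{\alpha(n,\ell)}(q^n,-z) = (-z)^{\alpha(n,\ell)} q^{(\ell - n/2)^2/(2n)}(q^n;q^n)_\infty(zq^\ell;q^n)_\infty(z^{-1}q^{n-\ell};q^n)_\infty,
\]
and specializing $z = 1$ gives $\vartheta_{\alpha(n,\ell)}(q^n)$ in the same product form. The $q$-prefactor cancels in the ratio. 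Taking the product over $j = 1, \ldots, s$, the total $z$-exponent is $\sum_j \alpha(n,\ell_j)$, which vanishes by symmetry of $S$ (pairs $\ell, n-\ell$ cancel, and a self-paired $\ell = n/2$ contributes $\alpha = 0$). Using the same symmetry to re-index $\{n - \ell_j\} = \{\ell_j\}$ as sets, each positive integer $M \equiv S \pmod n$ appears exactly once in the form $M = \ell_j + nk$, yielding
\[
\prod_{j=1}^s \frac{\vartheta_{\alpha(n,\ell_j)}(q^n,-z)}{\vartheta_{\alpha(n,\ell_j)}(q^n)} = \prod_{\substack{M > 0 \\ M \in S \pmod n}}\!\!\left(1 - \frac{x^2 q^M}{(1-q^M)^2}\right) = \sum_{k \geq 0}(-1)^k A_{S,n,k}(q)\, x^{2k},
\]
which is the first identity.

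For the second identity, the case $\ell_0 = n$ (i.e.\ $r = 1/2$) is singular: $\vartheta_{1/2}(q^n) = 0$ because the terms $m$ and $-m$ in its defining sum cancel, and $(-z)^{1/2}$ is multivalued. The same triple-product computation here gives
\[
\vartheta_{1/2}(q^n,-z) = (-z)^{1/2}(1 - z^{-1})\,q^{n/8}(q^n;q^n)_\infty(zq^n;q^n)_\infty(z^{-1}q^n;q^n)_\infty = ix\cdot q^{n/8}(q^n;q^n)_\infty(zq^n;q^n)_\infty(z^{-1}q^n;q^n)_\infty,
\]
using $(-z)^{1/2}(1 - z^{-1}) = i(z^{1/2} - z^{-1/2}) = ix$. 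Dividing by $\eta(q^n)^3 = q^{n/8}(q^n;q^n)_\infty^3$ and multiplying by $-i$ cancels the $i$ and contributes exactly $x$ times the factor corresponding to the class $\equiv 0 \pmod n$. Combining with the previous computation then gives $x\sum_k(-1)^k A_{S,n,k}(q)x^{2k} = \sum_k(-1)^k A_{S,n,k}(q)x^{2k+1}$.

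The main obstacle is bookkeeping rather than conceptual: one must carefully track the multivalued prefactors $(-z)^r$ and $(-1)^r$ through the ratio, confirm that symmetry of $S$ is precisely what is needed to kill the $z^r$ prefactors and to produce each positive integer in $S \pmod n$ exactly once, and verify that the factor $\eta(q^n)^3$ and the sign $-i$ in the second formula are uniquely forced by regularizing the singular $\ell_0 = n$ contribution where $\vartheta_{1/2}(q^n)$ vanishes.
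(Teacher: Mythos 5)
Your proposal is correct and follows essentially the same route as the paper's proof: both rest on the Jacobi triple product, the identity $x^2+2=z+z^{-1}$ turning $(1-zq^M)(1-z^{-1}q^M)/(1-q^M)^2$ into $1-x^2q^M/(1-q^M)^2$, the symmetry of $S$ to cancel the $z^{\alpha(n,\ell)}$ prefactors and re-index $\{n-\ell_j\}=\{\ell_j\}$, and the separate $r=\tfrac12$ triple product producing the $-i\,/\,x\eta(q^n)^3$ regularization when $n\in S$. The only difference is that you expand the right-hand side down to the product form while the paper builds the product up from the left-hand side, which is an immaterial reversal of direction.
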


\begin{theorem}\label{thm_main_B}
Fix a non-negative integer \(n\) and a symmetric set of representatives \(S\). Then we have that
\[
\begin{aligned}
\sum_{k=0}^\infty B_{S, n, k}(q) x^{2k } &= \prod_{j=1}^s\frac{\vartheta_{\alpha(n, \ell_j)}(q^n, z )}{\widetilde{\vartheta}_{\alpha(n, \ell_j)}(q^n)} & n \notin S
\\
\sqrt{1 + \big(\tfrac{x}{2}\big)^2}\sum_{k=0}^\infty B_{S, n, k}(q) x^{2k} &= \frac{\prod_{j=0}^s\vartheta_{\alpha(n, \ell_j)}(q^n, z )}{\prod_{j=0}^s\widetilde{\vartheta}_{\alpha(n, \ell_j)}(q^n)} & n \in S
\end{aligned}
\]
where again \(x = z^{1/2} - z^{-1/2}\).
\end{theorem}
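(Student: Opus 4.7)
The plan is to prove this theorem by a direct computation closely parallel to the proof of Theorem~\ref{thm_main_A}, with factors of the form $(1-q^k z^{\pm 1})$ replaced by $(1+q^k z^{\pm 1})$. The starting point is the Jacobi triple product, which yields
\[
\vartheta_r(q^n, z) = z^r q^{nr^2/2} \prod_{m \ge 1} (1 - q^{nm})\bigl(1 + q^{n(m-1/2) + nr} z\bigr)\bigl(1 + q^{n(m-1/2) - nr} z^{-1}\bigr).
\]
For $r = \alpha(n,\ell) = \ell/n - 1/2$, the exponents $n(m-1/2) \pm n\alpha(n,\ell)$ parametrize the positive integers congruent to $\pm \ell \pmod n$. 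Specializing at $z=1$ gives the analogous product for $\widetilde{\vartheta}_{\alpha(n,\ell)}(q^n)$, and in the ratio the $q$-prefactor cancels, leaving a clean expression in terms of $z^{\alpha(n,\ell)}$ times products of $(1 + q^k z^{\pm 1})/(1+q^k)$ over the relevant residue classes.

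For the case $n \notin S$, I would take the product over $j = 1, \ldots, s$. By the symmetry of $S$, the prefactors $z^{\alpha(n,\ell_j)}$ multiply to $1$ (using $\alpha(n, \ell) + \alpha(n, n - \ell) = 0$), and the inner products consolidate into a single product over the positive integers $k$ with $k \in S \pmod n$:
\[
\prod_{j=1}^s \frac{\vartheta_{\alpha(n,\ell_j)}(q^n, z)}{\widetilde{\vartheta}_{\alpha(n,\ell_j)}(q^n)} = \prod_{\substack{k > 0 \\ k \in S \pmod n}} \frac{(1 + q^k z)(1 + q^k z^{-1})}{(1 + q^k)^2}.
\]
Since $x^2 = z + z^{-1} - 2$, a direct expansion gives $(1 + q^k z)(1 + q^k z^{-1}) = (1+q^k)^2 + q^k x^2$, so each factor equals $1 + q^k x^2/(1+q^k)^2$. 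Expanding the infinite product as a sum over strictly increasing finite subsets $\{m_1 < \cdots < m_k\}$ of $\{k > 0 : k \in S \pmod n\}$ recovers $\sum_{k \ge 0} B_{S,n,k}(q)\, x^{2k}$ exactly, which is the first claim.

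For the case $n \in S$, the additional index $\ell_0 = n$ contributes $\alpha(n,n) = 1/2$. In the triple-product expansion of $\vartheta_{1/2}(q^n,z)$ the $z^{-1}$-product is $\prod_{m \ge 1}(1 + q^{n(m-1)}z^{-1})$, which at $m=1$ produces the unpaired factor $(1 + z^{-1})$; combining this with the $z^{1/2}$ prefactor yields $z^{1/2} + z^{-1/2}$. Dividing by $\widetilde{\vartheta}_{1/2}(q^n) = 2q^{n/8}\prod_m(1-q^{nm})(1+q^{nm})^2$ then contributes the factor $(z^{1/2} + z^{-1/2})/2 = \sqrt{1 + (x/2)^2}$, since $(z^{1/2}+z^{-1/2})^2 = x^2 + 4$. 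The remaining products from $\ell_0 = n$ (which cover the multiples of $n$) combine with those from $\ell_1, \ldots, \ell_s$ exactly as in the previous case, producing the same type of product expansion and hence $\sum_k B_{S,n,k}(q) x^{2k}$ on the right after moving the square root to the left-hand side. The main technical point is the bookkeeping around this half-integer character: correctly identifying the unpaired factor in the triple product, keeping the branch of $z^{1/2}$ consistent when rewriting in terms of $x$, and checking that when $n \in S$ the multiples of $n$ are accounted for exactly once in the consolidated product. These steps are routine but require care; the remainder of the argument is formal algebraic manipulation.
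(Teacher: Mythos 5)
Your proposal is correct and follows essentially the same route as the paper, which proves Theorem~\ref{thm_main_B} only by the remark that the argument for Theorem~\ref{thm_main_A} carries over \emph{mutatis mutandis} with $(1-q^k z^{\pm 1})$ replaced by $(1+q^k z^{\pm 1})$, together with the observation that the $r=\tfrac12$ triple-product prefactor $z^{1/2}+z^{-1/2}$ equals $\sqrt{x^2+4}$. Your write-up simply makes explicit (running the computation from the theta side rather than from the product side) the steps the paper leaves implicit, and all of them check out, including the identity $(1+q^kz)(1+q^kz^{-1})=(1+q^k)^2+q^kx^2$ and the handling of the unpaired factor at $\ell_0=n$.
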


The connection between Theorems \ref{thm_main_A} and \ref{thm_main_B} and Theorem \ref{thm_old} is that the \(\vartheta\)-functions can themselves be written in terms of Chebyshev polynomials.

The main interest in the study of these functions is to understand their modular properties. The functions \(A_k(q)\) and \(C_k(q)\) are proven to be quasi-modular in \cite{andrews_rose}, and it is natural to ask whether or not the same holds in this generalized setting. We obtain the following.

\begin{theorem}\label{thm_top_weight}
Assume that \(n \notin S\). The functions \(A_{S, n, k}(q)\) are quasi-modular forms of weight at most \(2k\) for some congruence subgroup \(\Gamma\) of \(SL_2(\ZZ)\). The pure weight parts depend only on \(S\) (and \(n\)). In particular, the weight \(2w\) part is given by a multiple of
\[
\sum_{i_1 + \cdots + i_s = 2w} {2w \choose {i_1, \ldots, i_s}} \prod_{\ell \in S} \frac{\vartheta_{\alpha(n, \ell)}^{ (i_\ell)}(q^n)}{\vartheta_{\alpha(n, \ell)}(q^n)}.
\]
\end{theorem}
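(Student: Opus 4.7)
The plan is to read off the quasi-modular decomposition of $A_{S,n,k}(q)$ directly from Theorem \ref{thm_main_A} by Taylor expansion. I work in the coordinate $u = \log z$, so that $x = z^{1/2} - z^{-1/2} = 2\sinh(u/2)$ and $u = 2\sinh^{-1}(x/2) = x - x^3/24 + \cdots$ is an odd power series in $x$. The Taylor expansion of each theta factor about $u = 0$ is immediate from Definition \ref{def_theta}: since $-e^u = e^{i\pi + u}$,
\[
\vartheta_{\alpha(n,\ell)}(q^n, -e^u) = \sum_{t \geq 0} \frac{u^t}{t!}\,\vartheta_{\alpha(n,\ell)}^{(t)}(q^n),
\]
so multiplying the factors in the right-hand side of \eqref{eq_thm_main_A} and regrouping by total degree yields
\[
\prod_{j=1}^s \frac{\vartheta_{\alpha(n,\ell_j)}(q^n, -z)}{\vartheta_{\alpha(n,\ell_j)}(q^n)} = \sum_{T \geq 0} \frac{u^T}{T!}\,\Phi_T(q),
\]
where $\Phi_T(q)$ is exactly the multinomial sum appearing in the theorem statement (with $T$ in place of $2w$).

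Since $S$ is symmetric with $n \notin S$, the pairing $\ell \leftrightarrow n - \ell$, together with the identity $\vartheta_{-\alpha}(q^n, -z) = \vartheta_\alpha(q^n, -z^{-1})$ (from $m \mapsto -m$ in the defining series), forces the entire product to be invariant under $u \mapsto -u$, so $\Phi_T \equiv 0$ for odd $T$. Substituting $u^{2w} = x^{2w}(1 + O(x^2))$ and comparing the coefficients of $x^{2k}$ on both sides of Theorem \ref{thm_main_A} then gives
\[
(-1)^k A_{S,n,k}(q) = \sum_{w = 0}^{k} c_{k,w}\,\Phi_{2w}(q), \qquad c_{k,k} = \frac{1}{(2k)!},
\]
for explicit rational constants $c_{k,w}$ depending only on $k$ and $w$. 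This reduces the theorem to showing that each $\Phi_{2w}(q)$ is a quasi-modular form of weight $2w$ for some congruence subgroup $\Gamma \subset SL_2(\ZZ)$ independent of $k$; granting this, the weight-$2w$ part of $A_{S,n,k}(q)$ is $(-1)^k c_{k,w}\Phi_{2w}(q)$, a multiple of the multinomial sum depending on $k$ only through the scalar $c_{k,w}$.

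The hard part is precisely this final quasi-modularity claim. The plan is to view $\prod_j \vartheta_{\alpha(n,\ell_j)}(q^n, -z)$ as a Jacobi form of weight $s/2$ and explicit index for a congruence subgroup $\Gamma$ whose level is controlled by $2n$, so that the normalized quantity $F(q, z) = \prod_j \vartheta_{\alpha(n,\ell_j)}(q^n, -z)/\prod_j \vartheta_{\alpha(n,\ell_j)}(q^n)$ is a meromorphic Jacobi form of weight $0$ satisfying $F(q, 1) = 1$. By the Eichler--Zagier-style theorem that Taylor coefficients of a Jacobi form at a torsion point are quasi-modular of the corresponding weight, the coefficient of $u^T/T!$ in the expansion of $F$ about $u = 0$, namely $\Phi_T(q)$, is quasi-modular of weight $T$ for $\Gamma$. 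The technical subtlety lies in identifying $\Gamma$ precisely, verifying that division by $\prod_j \vartheta_{\alpha(n,\ell_j)}(q^n)$ preserves the quasi-modular structure (which holds because numerator and denominator carry the same automorphy factor), and handling the mild meromorphy coming from potential zeros of the denominator at cusps; these are standard manipulations with theta multipliers, but they form the genuine content beyond the formal power-series bookkeeping.
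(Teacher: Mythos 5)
Your proposal is correct and follows essentially the same route as the paper: expand the Jacobi-form identity of Theorem \ref{thm_main_A} in the additive variable ($u=\log z$, i.e.\ $2\pi i\sigma$), identify the Taylor coefficients with the multinomial sum via the Leibniz rule, use the symmetry of $S$ to kill the odd coefficients, pass between the $x$- and $u$-expansions by the triangular odd power series $u=2\sinh^{-1}(x/2)$ (the paper's $\arcsin$ substitution), and invoke the Eichler--Zagier result that Taylor coefficients of a weight-$0$ Jacobi form are quasi-modular of weight equal to the Taylor degree. The points you flag as the remaining technical content (the multiplier system, the precise $\Gamma$, and division by the theta constants) are treated equally lightly in the paper, which simply asserts the quotient is a weight-$0$ Jacobi form and cites Proposition \ref{prop_jac_qmod}.
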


\begin{theorem}\label{thm_top_weight_B}
Assume that \(n \in S\), and write \(S = \{n, \ell_1, \ldots, \ell_s\}\). Then the functions \(A_{S, n, k}(q)\) are quasi-modular forms of weight at most \(2k\) for some congruence subgroup \(\Gamma\) of \(SL_2(\ZZ)\). The pure weight parts depend only on \(S\) (and on \(n\)). In particular, the weight \(2w\) part is given by some multiple of
\[
 \sum_{(2i_0+1) + i_1 + \cdots + i_s = 2w+1} {2w + 1 \choose 2i_0 + 1, i_1, \ldots, i_s} \Big(\frac{2}{n}\Big)^{i_0}\frac{\Big[\Big(q\frac{d}{dq}\Big)^{i_0} \eta(q^n)^3 \Big]\prod_{j=1}^s \vartheta_{\alpha(n, \ell_j)}^{(i_j)}(q^n)}{\eta(q^n)^3 \prod_{j=1}^s \vartheta_{\alpha(n, \ell_j)}(q^n)}.
\]
\end{theorem}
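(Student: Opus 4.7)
The argument would parallel Theorem~\ref{thm_top_weight}, the key new ingredient being the handling of the factor $\vartheta_{\alpha(n,\ell_0)}(q^n,-z) = \vartheta_{1/2}(q^n,-z)$, which vanishes at $z=1$ (i.e.\ at $x=0$) and must be traded, via the theta heat equation, for the $\eta(q^n)^3$ sitting in the denominator. I would begin from the generating-function identity of Theorem~\ref{thm_main_A} in the case $n \in S$, set $u := \log z$, and Taylor-expand each numerator factor as
\[
\vartheta_r(q^n,-z) = \sum_{t \geq 0} \frac{u^t}{t!}\,\vartheta_r^{(t)}(q^n),
\]
which follows immediately from Definition~\ref{def_theta}. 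Multiplying the $s+1$ series produces a multinomial expansion indexed by $(i_0,i_1,\ldots,i_s)$, whose generic term carries $u^{i_0+\cdots+i_s}$ times $\vartheta_{1/2}^{(i_0)}(q^n)\prod_{j=1}^s \vartheta_{\alpha(n,\ell_j)}^{(i_j)}(q^n)$ divided by $i_0!\cdots i_s!$.

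Next I would observe that $\vartheta_{1/2}(q^n,-z)$ is odd in $u$ (the sum over $m \in \ZZ+\tfrac12$ pairs $m$ with $-m$ with opposite signs after multiplication by $e^{\pi i m}$), so only odd indices $i_0 = 2i_0'+1$ contribute. Combining the classical identity $\vartheta_{1/2}^{(1)}(q) = i\eta(q)^3$ (an immediate consequence of the Jacobi triple product) with the theta heat equation $(z\partial_z)^2 \vartheta_r(q^n,-z) = (2/n)\,q\partial_q \vartheta_r(q^n,-z)$ gives
\[
\vartheta_{1/2}^{(2i_0+1)}(q^n) = i\Big(\tfrac{2}{n}\Big)^{i_0}(q\partial_q)^{i_0}\eta(q^n)^3.
\]
Substituting, the factor $i$ cancels the overall $-i$ out front, and the $\eta(q^n)^3$ in the denominator divides the $(q\partial_q)^{i_0}\eta(q^n)^3$ to produce exactly the quotient $[(q\,d/dq)^{i_0}\eta(q^n)^3]/\eta(q^n)^3$ appearing in the statement. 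Setting $N := (2i_0+1) + i_1 + \cdots + i_s$ and collecting, the coefficient of $u^N$ in the generating function equals $1/N!$ times the expression displayed in the theorem with $N = 2w+1$.

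The last step is the conversion $u \mapsto x$: from $x = 2\sinh(u/2)$ one has $u = x + O(x^3)$, and hence $u^N = x^N + \text{(higher-order corrections in }x^2\text{)}$, so matching the coefficient of $x^{2k+1}$ expresses $(-1)^k A_{S,n,k}(q)$ as a linear combination of the $u^{2w+1}$-coefficients for $0 \le w \le k$, with the $w=k$ term (the undistorted "top weight" piece) appearing with coefficient $1$. Quasi-modularity is then read off from the Eichler--Zagier theory of Jacobi forms: each $\vartheta_{\alpha(n,\ell_j)}(q^n,-z)$ is a Jacobi form (with character) of weight $\tfrac12$ on some congruence subgroup $\Gamma$, and its $i_j$-th Taylor coefficient $\vartheta_{\alpha(n,\ell_j)}^{(i_j)}(q^n)$ is quasi-modular of weight $\tfrac12 + i_j$; similarly $(q\partial_q)^{i_0}\eta(q^n)^3$ is quasi-modular of weight $\tfrac32 + 2i_0$. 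Dividing by the weight-$(\tfrac32 + s/2)$ denominator gives total weight $2i_0 + \sum_j i_j = N-1 = 2w$, as required. The main obstacle is precisely this dovetailing of the heat-equation replacement $\vartheta_{1/2}^{(2i_0+1)} \leftrightarrow \eta^3$ with the weight bookkeeping, together with verifying that the $u \to x$ substitution only mixes weights downward, so that the claimed weight bound of $2k$ is preserved.
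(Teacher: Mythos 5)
Your proposal is correct and follows essentially the same route as the paper: expand the Jacobi form of Theorem~\ref{thm_main_A} in the elliptic variable, use the oddness of \(\vartheta_{1/2}(q^n,-z)\) together with the heat equation and the identity \(D_z\vartheta_{1/2}(q^n,-z)|_{z=1}=i\eta(q^n)^3\) to convert the \(\ell_0=n\) factor into \((q\,d/dq)^{i_0}\eta(q^n)^3\), apply the multinomial Leibniz rule, and observe that the change of variable from the elliptic parameter back to \(x\) is of the form \(x+O(x^3)\) so that weights only mix downward. The only cosmetic difference is your use of \(u=\log z\) in place of the paper's \(\sigma\) and \(\mathrm{arcsinh}\) in place of \(\arcsin\), which amounts to the same computation.
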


\begin{remark}
We can computationally verify that the group in question for both of these theorems appears to be \(\Gamma_1(n)\). In fact, this has recently been proven in \cite{larson}.
\end{remark}

\begin{remark}
For the case where these overlap the paper \cite{andrews_rose}, this is actually a strengthening of that result; in that paper, all that is given is that the generating functions \(A_k(q), C_k(q)\) are quasi-modular forms of weight at most \(2k\), with no statement about the constancy of the pure-weight pieces.

In particular, in that case the quasi-modularity was given by showing that the functions \(A_k(q), C_k(q)\) can be defined recursively using derivatives of \(A_{k'}(q)\) and \(C_{k'}(q)\) (with \(k' < k\)), which imply quasi-modularity; this is in contrast to the case here where we prove quasi-modularity by writing an explicit description of each weight \(2w\) piece (up to multiplication by a scalar).
\end{remark}

\begin{notation}
Throughout this paper, we will use the notation \(D_\xi\) for the differential operator \(\xi \frac{\partial}{\partial \xi}\).
\end{notation}

\subsection*{Acknowledgements}

The author would like to thank the Max-Planck-Institut f\"ur Mathematik for hosting him while this research was being prepared. Furthermore, he would also like to thank Kathrin Bringmann, Robert Osburn, Noriko Yui, and Don Zagier for fruitful discussions about this topic, as well as the referees for their helpful comments.

\section{Jacobi Forms and \(\vartheta\)-functions}\label{sec_jac_forms}

We will describe now the relevant properties of Jacobi forms. For a more thorough reference one can consult \cite{eichler_zagier}.

\begin{definition}\label{def_q_jacobi}
Fix half-integers \(k, m\). A {\em Jacobi form} of weight \(k\) and index \(m\) for a subgroup \(\Gamma\) of \(SL_2(\ZZ)\) is a holomorphic function \(\phi : \HH \times \CC \to \CC\) which satisfies, for \(\begin{pmatrix} a & b \\ c & d \end{pmatrix} \in \Gamma\)
\begin{equation}\label{eq_mod_transform}
(c\tau + d)^{-k} \exp\Big(\frac{-2\pi i m c \sigma^2}{c\tau + d}\Big) \phi\Big(\frac{a \tau + b}{c\tau + d}, \frac{\sigma}{c\tau + d}\Big) = \phi(\tau, \sigma),
\end{equation}
(the modular transformation). Furthermore, for \(r, s \in \ZZ\), it satisfies
\[
\exp\big(2\pi i m(r^2 \tau + 2 r \sigma )\big) \phi(\tau, \sigma + r \tau + s) = \phi(\tau, \sigma)
\]
(the elliptic transformation). 
\end{definition}

\begin{proposition}\label{prop_jac_qmod}
Let \(\phi(\tau, \sigma)\) be a Jacobi form of weight \(k\) for some group \(\Gamma\), and write
\[
\phi(\tau, \sigma) = \sum_{m=0}^\infty \phi_m(\tau) \sigma^m
\]
as the Taylor expansion of \(\phi(\tau, \sigma)\) with respect to \(\sigma\). Then the functions \(\phi_m(\tau)\) are quasi-modular forms of weight \(k + m\) for the group \(\Gamma\).
\end{proposition}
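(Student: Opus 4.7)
The plan is to take the modular transformation law \eqref{eq_mod_transform} for $\phi(\tau,\sigma)$, rewrite it in the form
\[
\phi\Bigl(\tfrac{a\tau+b}{c\tau+d}, \tfrac{\sigma}{c\tau+d}\Bigr) = (c\tau+d)^{k}\exp\Bigl(\tfrac{2\pi i m c\sigma^{2}}{c\tau+d}\Bigr)\phi(\tau,\sigma),
\]
and expand both sides as formal power series in $\sigma$. The left-hand side expands immediately as $\sum_{n\geq 0}\phi_{n}\bigl(\tfrac{a\tau+b}{c\tau+d}\bigr)(c\tau+d)^{-n}\sigma^{n}$. The right-hand side is the product of $(c\tau+d)^{k}\phi(\tau,\sigma)$ with the Taylor expansion of the exponential, which contains only even powers of $\sigma$. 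Comparing coefficients of $\sigma^{n}$ will give a closed-form transformation law for $\phi_{n}$ in terms of the earlier coefficients $\phi_{n-2j}$.

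Concretely, after extracting the coefficient of $\sigma^{n}$ and dividing through by $(c\tau+d)^{k+n}$, I expect to obtain exactly
\[
(c\tau+d)^{-(k+n)}\phi_{n}\Bigl(\tfrac{a\tau+b}{c\tau+d}\Bigr) = \sum_{j=0}^{\lfloor n/2\rfloor}\frac{(2\pi i m)^{j}}{j!}\Bigl(\frac{c}{c\tau+d}\Bigr)^{j}\phi_{n-2j}(\tau).
\]
This is precisely the defining transformation property of a quasi-modular form of weight $k+n$ and depth at most $\lfloor n/2\rfloor$ for the group $\Gamma$: the $j=0$ term recovers $\phi_{n}(\tau)$, and the corrections are polynomials in $c/(c\tau+d)$ whose coefficients are themselves (by downward induction on $n$) quasi-modular forms of the correct weights $k+n-2j$. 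Holomorphy on $\HH$ for each $\phi_{n}$ follows from holomorphy of $\phi$, and the standard growth condition of a Jacobi form at the cusps transfers to $\phi_{n}$ in the usual way.

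There is not really a serious obstacle here beyond careful bookkeeping. The only point worth a line of attention is confirming that the Cauchy product of the exponential's Taylor series with that of $\phi(\tau,\sigma)$ is justified (both are absolutely convergent in a neighbourhood of $\sigma=0$, uniformly on compact subsets of $\HH$, since $\phi$ is holomorphic on $\HH\times\CC$), and that reindexing $2j+l=n$ is valid term by term. Once the equation above is written out, identifying it with the definition of a quasi-modular form of weight $k+n$ is immediate, which completes the proof.
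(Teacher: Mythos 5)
Your proposal is correct and is essentially the paper's argument: the identity you derive for $(c\tau+d)^{-(k+n)}\phi_n\bigl(\tfrac{a\tau+b}{c\tau+d}\bigr)$ is precisely equation (6) on p.~31 of Eichler--Zagier, which the paper simply cites and then observes is the transformation law of a quasi-modular form of weight $k+n$. You have merely carried out the expansion that the cited reference records, so there is nothing to add beyond the bookkeeping you already note.
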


\begin{proof}
This is given by equation (6) on p. 31 of \cite{eichler_zagier}. Specifically, one notes that the transformation law presented is exactly the transformation law of a quasi-modular form.
\end{proof}

\begin{corollary}
Let \(\phi(\tau, \sigma)\) be a Jacobi form of weight \(k\) for some group \(\Gamma\). Then the function
\[
\Big(\frac{\partial}{\partial \sigma}\Big)^m \phi(\tau, \sigma)\Big|_{\sigma = 0}
\]
is quasi-modular of weight \(k + m\).
\end{corollary}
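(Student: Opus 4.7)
The plan is essentially a one-line reduction to the preceding Proposition \ref{prop_jac_qmod}. By Taylor's theorem, the Taylor coefficients $\phi_m(\tau)$ in the expansion
\[
\phi(\tau, \sigma) = \sum_{m=0}^\infty \phi_m(\tau) \sigma^m
\]
are related to the $\sigma$-derivatives of $\phi$ at $\sigma = 0$ by the identity
\[
\phi_m(\tau) = \frac{1}{m!}\Big(\frac{\partial}{\partial \sigma}\Big)^m \phi(\tau, \sigma)\Big|_{\sigma = 0}.
\]
This holomorphic dependence is justified by the fact that $\phi$ is holomorphic in both variables, so term-by-term differentiation of the Taylor series in $\sigma$ is valid on a neighborhood of $\sigma = 0$.

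Once this identity is in hand, the corollary follows immediately: Proposition \ref{prop_jac_qmod} says that $\phi_m(\tau)$ is a quasi-modular form of weight $k + m$ for the group $\Gamma$, and multiplying by the nonzero scalar $m!$ preserves this property. There is no real obstacle here — the content is entirely in Proposition \ref{prop_jac_qmod}, and the corollary is just the rephrasing of ``$m$-th Taylor coefficient'' as ``$(1/m!)$ times the $m$-th derivative at zero.''
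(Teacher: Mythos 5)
Your proof is correct and is essentially identical to the paper's, which simply observes that the derivative at $\sigma = 0$ is $m!$ times the Taylor coefficient $\phi_m(\tau)$ and then invokes Proposition \ref{prop_jac_qmod}. Your added remarks on holomorphy and term-by-term differentiation are harmless elaborations of the same one-line argument.
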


\begin{proof}
This is nothing but \(m!\) times the Taylor coefficient, by definition.
\end{proof}

The key note for us is that the functions introduced in Definition \ref{def_theta}, namely
\[
\vartheta_r(q, z) = \sum_{m \in \ZZ + r} q^{\frac{1}{2}m^2}z^m
\]
are Jacobi forms of weight \(k = \frac{1}{2}\) and index \(m = \frac{1}{2}\), for some subgroup \(\Gamma \subset SL_2(\ZZ)\) which depends on the rational number \(r\).

\begin{remark}
For the remainder of the paper, we will switch back to the notation \(\phi(q,z)\) for Jacobi forms/\(\vartheta\)-functions, where \(q = e^{2\pi i \tau}\) and \(z = e^{2\pi i \sigma}\), as it provides for cleaner notation.
\end{remark}

\section{Proofs of main Theorems}

We will now prove Theorem \ref{thm_main_A}.

\begin{proof}[Proof of Theorem \ref{thm_main_A}]
The proof of this will closely follow the proof of Theorem \ref{thm_old} (in \cite{andrews_rose}). We will begin by assuming for simplicity that \(n \notin S\).

We will begin by noting a version of the Jacobi triple product as it pertains to the functions \(\vartheta_r(q, z)\), which is
\begin{equation}\label{eq_JTP1}
\sum_{m \in \ZZ + r} q^{\frac{1}{2}m^2}z^m = z^{r}q^{\frac{r^2}{2}}\prod_{m > 0} (1 - q^m)(1 + zq^{m + r - \frac{1}{2}})(1 + z^{-1}q^{m - r - \frac{1}{2}})
\end{equation}
if \(r \neq \frac{1}{2}\), and
\begin{equation}\label{eq_JTP2}
\sum_{m \in \ZZ + \frac{1}{2}} q^{\frac{1}{2}m^2}z^m = (z^{1/2} + z^{-1/2})q^{\frac{1}{8}}\prod_{m > 0} (1 - q^m)(1 + zq^m)(1 + z^{-1}q^m)
\end{equation}
if \(r = \frac{1}{2}\).

Let us now note that
\begin{align*}
\sum_{k=0}^\infty A_{S,n,k}(q) x^{2k} &= \prod_{\ell \in S} \prod_{m = 0}^\infty \Big( 1 - \frac{x^2q^{nm + \ell}}{(1 - q^{nm + \ell})^2}\Big) \\
&= \prod_{\ell \in S} \prod_{m=0}^\infty \frac{1 - (x^2 + 2)q^{nm + \ell} + q^{2(nm + \ell)}}{(1 - q^{nm + \ell})^2}.
\end{align*}
If we then write \(x = z^{1/2} - z^{-1/2}\) (and hence \(x^2 + 2 = z + z^{-1}\)) we find that this is equal to
\[
\prod_{\ell \in S} \prod_{m = 0}^\infty \frac{(1 - zq^{nm + \ell})(1 - z^{-1}q^{nm + \ell})}{(1 - q^{nm + \ell})^2}.
\]
Due to the symmetry of \(S\), for each term \((1 - z^{\pm1}q^{nm + \ell})\) there is a corresponding term \((1 - z^{\mp 1}q^{nm + n - \ell})\) (similarly for the denominator), and so we can write this as
\[
\prod_{\ell \in S} \prod_{m=0}^\infty \frac{(1 - zq^{nm + \ell})(1 - z^{-1}q^{n(m+1) - \ell})}{(1 - q^{nm + \ell})(1 - q^{n(m+1) - \ell})}
=
\prod_{\ell \in S} \prod_{m=0}^\infty \frac{(1 - q^{nm})(1 - zq^{nm + \ell})(1 - z^{-1}q^{n(m+1) - \ell})}{(1 - q^{nm})(1 - q^{nm + \ell})(1 - q^{n(m+1) - \ell})}.
\]

We now apply the equation \eqref{eq_JTP1} with \(r = \alpha(n, \ell)\) to the numerator and denominator from which we obtain
\[
\prod_{\ell \in S} \frac{z^{\alpha(n, \ell)} \vartheta_{\alpha(n, \ell)}(q^n, -z)}{\vartheta_{\alpha(n, \ell)}(q^n,-1)}.
\]
Since \(S\) is symmetric and \(\alpha(n, \ell) = -\alpha(n, n - \ell)\), it follows that this is
\[
\prod_{\ell \in S} \frac{\vartheta_{\alpha(n, \ell)}(q^n, -z)}{\vartheta_{\alpha(n, \ell)}(q^n)}
\]
as claimed.

In the case that \(n \in S\), we will have an extra term in the product of the form
\begin{align*}
\prod_{m=1}^\infty \Big(1 - \frac{x^2q^{nm}}{(1 - q^{nm})^2}\Big)
&=
\prod_{m=1}^\infty \frac{(1 - zq^{nm})(1 - z^{-1}q^{nm})}{(1 - q^{nm})^2} \\
&= \prod_{m=1}^\infty \frac{(1 - q^{nm})(1 - zq^{nm})(1 - z^{-1}q^{nm})}{(1 - q^{nm})^3}
\end{align*}
which by equation \eqref{eq_JTP2} is simply
\begin{align*}
\frac{1}{\big((-z)^{1/2} + (-z)^{-1/2}\big)\eta(q^n)^3}\sum_{m \in \ZZ + \tfrac{1}{2}} q^{\frac{1}{2}nm^2}(-z)^m &= \frac{-i}{(z^{1/2} - z^{-1/2}) \eta(q^n)^3 }\vartheta_{1/2}(q^n, -z)
\end{align*}
as claimed.

\end{proof}

The proof of Theorem \ref{thm_main_B} is obtained {\em mutatis mutandi}, if we note that from equation \eqref{eq_JTP2} the term \(z^{1/2} + z^{-1/2} = \sqrt{x^2 + 4}\) when \(x = z^{1/2} - z^{-1/2}\).

\subsection{Modularity of the generating functions}

To prove modularity, we note that the left-hand side of the expression \eqref{eq_thm_main_A} is a product of Jacobi \(\vartheta\)-functions, and in particular it is a Jacobi form of weight 0 and index \(s/2\).

We need one quick proposition before we can properly discuss modularity.

\begin{proposition}\label{prop_even_odd}
Fix \(n\) and a symmetric set \(S\). The Jacobi form \(\prod_{\ell \in S} \vartheta_{\alpha(n, \ell)}(q^n, -z)\) is an even function of \(\sigma\) if and only if \(n \notin S\), and it is an odd function of \(\sigma\) if and only if \(n \in S\) (where \(z = e^{2\pi i \sigma}\)).
\end{proposition}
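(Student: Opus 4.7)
The plan is to test how the product transforms under $\sigma \mapsto -\sigma$ — equivalently $z \mapsto z^{-1}$ — and check whether this leaves it invariant (even) or negates it (odd).

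First I would establish a single-factor reflection identity. From the defining series
\[
\vartheta_r(q, -z) = \sum_{m \in \ZZ + r} q^{m^2/2}\, e^{i\pi m}\, z^m
\]
(using the convention $(-1)^m = e^{i\pi m}$ that is implicit in Definition \ref{def_theta}), substituting $z \mapsto z^{-1}$ and reindexing $m \mapsto -m$ sends the summation set $\ZZ + r$ to $\ZZ - r$ and replaces $e^{i\pi m}$ by $e^{-i\pi m}$. The ratio $e^{-i\pi m}/e^{i\pi m} = e^{-2\pi i m}$ is constant on the coset $\ZZ - r$, equal to $e^{2\pi i r}$, yielding
\[
\vartheta_r(q, -z^{-1}) = e^{2\pi i r}\, \vartheta_{-r}(q, -z).
\]

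Next I would apply this identity to each factor of $\prod_{\ell \in S} \vartheta_{\alpha(n, \ell)}(q^n, -z)$ and reorganize using the symmetry of $S$. Because $\alpha(n, n - \ell) = -\alpha(n, \ell)$ and $\vartheta_r$ depends on $r$ only modulo $1$, the involution $\ell \leftrightarrow n - \ell \pmod n$ on $S$ identifies $\vartheta_{-\alpha(n, \ell)}(q^n, -z)$ with $\vartheta_{\alpha(n, n - \ell)}(q^n, -z)$, so the transformed product becomes
\[
\prod_{\ell \in S} \vartheta_{\alpha(n, \ell)}(q^n, -z^{-1}) = e^{2\pi i \sum_{\ell \in S} \alpha(n, \ell)} \prod_{\ell \in S} \vartheta_{\alpha(n, \ell)}(q^n, -z).
\]

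Finally I would evaluate the overall phase. The same pairing $\ell \leftrightarrow n - \ell$ cancels $\alpha(n, \ell)$ in pairs for $\ell \neq n$, and the possible fixed point $\ell = n/2$ (when $n$ is even) contributes $\alpha(n, n/2) = 0$; the only uncancelled contribution is $\alpha(n, n) = 1/2$, which appears exactly when $n \in S$. Hence the phase is $+1$ when $n \notin S$ and $-1$ when $n \in S$, and since the product is a nonzero holomorphic function of $z$, the two alternatives are mutually exclusive, which yields the claimed even/odd dichotomy. The only real subtlety in the argument is the bookkeeping of the non-integer phases $e^{\pm i\pi m}$, which is unambiguous once the convention from Definition \ref{def_theta} is in force; I do not anticipate any serious obstacle beyond that.
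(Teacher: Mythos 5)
Your argument is correct and follows essentially the same route as the paper: test the product under $z \mapsto z^{-1}$, use the symmetry of $S$ to pair $\ell \leftrightarrow n-\ell$, and observe that only the factor with $\ell = n$ (i.e.\ $r = \tfrac{1}{2}$) contributes a sign. Your version is in fact slightly more complete, since the single reflection identity $\vartheta_r(q,-z^{-1}) = e^{2\pi i r}\vartheta_{-r}(q,-z)$ together with the phase sum handles uniformly the cases ($\ell \ne n$, and the fixed point $\ell = n/2$) that the paper only sketches as ``similar.''
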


\begin{proof}
Note that a function of \(\sigma\) being even is equivalent to it being invariant under the exchange of \(z \leftrightarrow z^{-1}\) (and similarly for it being odd).

The key here is the symmetry requirement. We essentially have three cases: \(\ell = n, \ell = \frac{n}{2}\), and other \(\ell\). We will only prove the first of these, the rest of them being similar.

If \(\ell = n\), we have the function \(\vartheta_{1/2}(q, -z)\) as part of the product. If we swap \(z\) for \(z^{-1}\) in this function, then we obtain
\begin{align*}
\sum_{m \in \ZZ + \frac{1}{2}} q^{\frac{1}{2}m^2}(-z^{-1})^m
&=
\sum_{m \in \ZZ + \frac{1}{2}} (-1)^m q^{\frac{1}{2}m^2}z^{-m} \\
&=
\sum_{m \in \ZZ + \frac{1}{2}} (-1)^{-m} q^{\frac{1}{2}m^2}z^{m}
\end{align*}
Since \(-m \equiv m + 1 \pmod 2\) for \(m \in \ZZ + \tfrac{1}{2}\), the claim follows.
\end{proof}

As usual, let us assume that \(n \notin S\). In particular, let us write it (by Propositions \ref{prop_jac_qmod} and \ref{prop_even_odd}) as
\[
\sum_{k=0}^\infty (-1)^k A_{S, n, k}(q) x^{2k}  =   \sum_{w=0}^\infty \widetilde{A}_{S, n, w}(q) \sigma^{2w}
\]
(if \(n \in S\), then we just increase the exponents of \(x\) and \(\sigma\) by one). It follows then that the functions \(\widetilde{A}_{S, n, w}(q)\) are quasi-modular of pure weight \(2w\) each. Our goal is now to find explicit expressions for \(\widetilde{A}_{S, n, w}(q)\) and to relate them to \(A_{S, n, k}(q)\).

The first thing that we note is that, since \(x = z^{1/2} - z^{-1/2}\) and \(z = e^{2\pi i \sigma}\) that we can write
\[
x = 2i \sin (\pi \sigma) \qquad \text{and} \qquad \sigma = \frac{1}{\pi} \arcsin(-ix/2)
\]
to translate between these two expressions. In particular, (letting \(x \mapsto ix\)) if we write this as
\[
\sum_{k=0}^\infty A_{S, n, k}(q) x^{2k}  =   \sum_{w=0}^\infty \widetilde{A}_{S, n, w}(q) \Big( \frac{1}{\pi} \arcsin(x/2)\Big)^{2w}
\]
then we see by looking at the coefficient of \(x^{2k}\) on the right-hand side that the terms of pure weight \(2w\)  of \(A_{S, n, k}(q)\) are simply \(\widetilde{A}_{S, n, w}(q)\) and that, moreover, the coefficients are given by
\[
 [x^{2k}]\sum_{w=0}^\infty\frac{1}{\pi^{2w}} \arcsin(x/2)^{2w}
\]
which is a finite sum, since \(\arcsin(x/2) = x/2 + O(x^2)\) and hence for \(w > k\), we have that \([x^{2k}]\arcsin(x/2)^{2w} = 0\).

Let us now prove Theorem \ref{thm_top_weight}.

\begin{proof}[Proof of Theorem \ref{thm_top_weight}]
Let us as before assume first that \(n \notin S\). We have then that
\[
\sum_{k=0}^\infty (-1)^k A_{S, n, k}(q) x^{2k} = \prod_{j=1}^s \frac{\vartheta_{\alpha(n, \ell_j)}(q^n, -z)}{\vartheta_{\alpha(n, \ell_j)}(q^n)} = \sum_{k=0}^\infty \widetilde{A}_{S,n,k}(q) \sigma^{2k}
\]
and we seek to understand the functions \(\widetilde{A}_{S,n,k}(q)\). These are given by
\begin{align*}
\Big(\frac{\partial}{\partial \sigma}\Big)^{2k} \prod_{j=1}^s \frac{\vartheta_{\alpha(n, \ell_j)}(q^n, -z)}{\vartheta_{\alpha(n, \ell_j)}(q^n)} \Big|_{\sigma = 0}
&=
\big(2\pi i D_z\big)^{2k} \prod_{j=1}^s \frac{\vartheta_{\alpha(n, \ell_j)}(q^n, -z)}{\vartheta_{\alpha(n, \ell_j)}(q^n)} \Big|_{z=1} \\
&=
(2\pi i)^{2k} D_z^{2k} \prod_{j=1}^s \frac{\vartheta_{\alpha(n, \ell_j)}(q^n, -z)}{\vartheta_{\alpha(n, \ell_j)}(q^n)} \Big|_{z=1}.
\end{align*}

Since
\[
\Big(\frac{d}{dx}\Big)^k \prod_{j=1}^s f_j(x) = \sum_{i_1 + \cdots + i_s = k} {k \choose i_1, \ldots, i_s} \prod_{j=1}^s f_j^{(i_j)}(x)
\]
it follows that that the pure weight terms \(\widetilde{A}_{S, n, k}(q)\) are, up to scaling,
\[
D_z^{2k} \prod_{j=1}^s \frac{\vartheta_{\alpha(n, \ell_j)}(q^n, -z)}{\vartheta_{\alpha(n, \ell_j)}(q^n)} \Big|_{z=1}
=
\sum_{i_1 + \cdots + i_s = 2k} {2k \choose i_1, \ldots, i_s} \prod_{j=1}^s \frac{\vartheta_{\alpha(n, \ell_j)}^{(i_j)}(q^n)}{\vartheta_{\alpha(n, \ell_j)}(q^n)}
\]
as claimed.

In the  case that \(n \in S\), the difference is that \(\vartheta_{1/2}^{(2k)}(q^n,-1) = 0\) (since \(\vartheta_{1/2}(q, z)\) is odd due to Proposition \ref{prop_even_odd}) for all \(k \geq 0\). Since \(\vartheta(q, z)\) satisfies the heat equation, we find that
\begin{align*}
D_z^{2k + 1}\vartheta(q^n, -z)
&= D_z^{2k}D_z\vartheta(q^n, -z) \\
&= \Big(\frac{2}{n}\Big)^{k}D_q^k D_z\vartheta(q^n, -z).
\end{align*}
Since we know classically that \(\eta(q)^3 = \sum_{m \in \ZZ}(-1)^m (m + \tfrac{1}{2})q^{\frac{1}{2}(m + \frac{1}{2})^2}\), it follows that
\[
D_z\vartheta(q^n, -z)\Big|_{z=1} = i\eta(q^n)^3
\]
and so
\[
D_z^{2k + 1}\vartheta(q^n, -z) = i\Big(\frac{2}{n}\Big)^k D_q^k \eta(q^n)^3.
\]

We can put all of this together now to find that---again, up to scaling---if \(n \in S\) that the pure-weight term \(\widetilde{A}_{S, n, k}(q)\) is given by
\begin{multline*}
D_z^{2k+1}\bigg[-i  \frac{\prod_{j=0}^s \vartheta_{\alpha(n, \ell_j)}(q^n, -z)}{\eta(q^n)^3 \prod_{j=1}^s \vartheta_{\alpha(n, \ell_j)}(q^n)}\bigg]
\\= 
\sum_{(2i_0+1) + i_1 + \cdots + i_s = 2k + 1} {2k+1 \choose 2i_0 + 1, i_1, \ldots, i_s} \Big(\frac{2}{n}\Big)^{i_0} \frac{\big[D_q^{i_0} \eta(q^n)^3\big] \prod_{j=1}^s \vartheta_{\alpha(n, \ell_j)}^{(i_j)}(q^n)}{\eta(q^n)^3\prod_{j=1}^s \vartheta_{\alpha(n, \ell_j)}(q^n)}
\end{multline*}
\end{proof}

\subsection{The functions \(B_{S, n, k}(q)\)}

It seems natural to look for a similar expression for the functions \(B_{S, n, k}(q)\). As for the case of the functions \(A_{S, n, k}(q)\), the expression that we obtain depends on whether or not \(n \in S\). For the case that \(n \notin S\), an argument identical to that above yields that \(B_{S, n, k}(q)\) is quasi-modular of impure weight, with the pure weight terms being given by multiples of
\[
\sum_{i_1 + \cdots + i_s = 2w} {2w \choose i_1, \ldots, i_s} \prod_{j=1}^s \frac{\widetilde{\vartheta}_{\alpha(n, \ell_j)}^{(i_j)}(q^n)}{\widetilde{\vartheta}_{\alpha(n, \ell_j)}(q^n)}
\]

In the case that \(n \in S\), it is a bit more complicated due to the the presence of the \(\sqrt{1 + \big(\tfrac{x}{2}\big)^2}\). However, by expanding this out in powers of \(x\) we obtain that
\[
\sqrt{1 + \big(\tfrac{x}{2}\big)^2}\sum_{k=0}^\infty B_{S, n, k}(q)x^{2k} = \sum_{k=0}^\infty \Big(\sum_{m=0}^k a_m B_{S, n, k-m}(q)\Big) x^{2k}
\]
(where \(\sqrt{1 + \big(\tfrac{x}{2}\big)^2} = \sum_{k=0}^\infty a_k x^{2k}\)), and so we can recursively determine that these functions are themselves quasi-modular, although the expression is more complicated.

\section{Further and related work}

The generating functions \(A_k(q), C_k(q)\) studied in \cite{andrews_rose} arise naturally in the following problem in enumerative geometry.

Let \(A\) be a polarized abelian surface with polarization \(L\) of type \((1, n)\). There is a \((g - 2)\)-dimensional family of genus \(g\) curves in the homology class \(\beta = c_1(L)^\vee\) up to translation, which is the codimension of the hyperelliptic locus in the moduli space of genus \(g\) curves. We would then naturally expect that there is a finite number of genus \(g\) hyperelliptic curves in this class.

Let \(F_g(q)\) denote the generating function for counting such curves. In \cite{rose_paper} it is shown (assuming the crepant resolution conjecture) that we can write \(F_g(q)\) as a polynomial of degree \((g-1)\) in the functions \(A_k(q^4), C_k(q^2)\) (where we assign the weight \(k\) to each of these functions), and hence \(F_g(q)\) is a quasi-modular form for \(\Gamma_0(4)\). Interestingly enough, there is strong numerical evidence that it is in fact quasi-modular for \(SL_2(\ZZ)\), due to some remarkable cancellations, a fact that will be further investigated.

Furthermore, these generating functions can be described in terms of Hurwitz-Hodge type integrals over certain moduli spaces of Hurwitz covers, more detail of which will appear in forthcoming work by the author.

A future goal is to then understand whether or not these more general functions can be found in an enumerative context.

\begin{appendices}
\section{Some coefficients}
For the sake of completeness, we will include a few coefficients for some small values of \(n\) and symmetric sets \(S\). For each of the given values of \(n, S\), we will write
\[
A_{S, n, k}(q) = \sum_{m=1}^\infty a_{S, n, k, m} q^m.
\]
All of the following have been computed with a custom Sage program.

\begin{enumerate}

\item \(n=3, S = \{1,2\}\)

\begin{tabular}{c|cccccccccccccccc}
$(k,m)$ & 1 & 2 & 3 & 4 & 5 & 6 & 7 & 8 & 9 & 10 & 11 & 12 & 13 & 14 & 15 \\
\hline
1 & 1 & 3 & 3 & 7 & 6 & 9 & 8 & 15 & 9 & 18 & 12 & 21 & 14 & 24 & 18 \\
2 & & & 1 & 2 & 6 & 12 & 20 & 30 & 48 & 66 & 90 & 124 & 154 & 204 & 240\\
3 & & & & & & & 1 & 3 & 7 & 15 & 30 & 49 & 87 & 132 & 210\\
4 & & & & & & & & & & & & 1 & 2 & 6 & 12
\end{tabular}

\item \(n=4, S = \{1,3\}\)

\begin{tabular}{c|cccccccccccccccc}
$(k, m)$ & 1 & 2 & 3 & 4 & 5 & 6 & 7 & 8 & 9 & 10 & 11 & 12 & 13 & 14 & 15 & 16\\
\hline
1 & 1 & 2 & 4 & 4 & 6 & 8 & 8 & 8 & 13 & 12 & 12 & 16 & 14 & 16 & 24 & 16\\
2 & & & & 1 & 2 & 4 & 8 & 14 & 18 & 28 & 40 & 52 & 70 & 88 & 104 & 140\\
3 & & & & & & & & & 1 & 2 & 4 & 8 & 14 & 24 & 40 & 56\\
4 & & & & & & & & & & & & & & & & 1
\end{tabular}

\item \(n=5, S = \{1,4\}\)

\begin{tabular}{c|cccccccccccccccc}
$(k, m)$ & 1 & 2 & 3 & 4 & 5 & 6 & 7 & 8 & 9 & 10 & 11 & 12 & 13 & 14 & 15 & 16\\
\hline
1 & 1 & 2 & 3 & 5 & 5 & 7 & 7 & 10 & 10 & 10 &12 & 17 & 13 & 15 & 15 & 21\\
2 & & & & & 1 & 2 & 4 & 6 & 10 & 16 & 20 & 26 & 38 &50 & 62 & 74\\
3 & & & & & & & & & & & 1 & 2 & 3 & 5 & 9 & 15
\end{tabular}

\item \(n=5, S = \{2,3\}\)

\begin{tabular}{c|cccccccccccccccc}
$(k, m)$ & 1 & 2 & 3 & 4 & 5 & 6 & 7 & 8 & 9 & 10 & 11 & 12 & 13 & 14 & 15 & 16\\
\hline
1 & & 1 & 1 & 2 & 0 & 5 & 1 & 5 & 3 & 5 & 0&11 & 1 & 9 & 5 & 10\\
2 & & & & & 1 & 0 & 2 & 2 & 4 & 6 & 10 & 8 & 16 & 18 & 22 & 30\\
3 & & & & & & & & & & & & 1 & 1 & 2 & 4 & 5

\end{tabular}

\item \(n=6, S = \{1,5\}\)

\begin{tabular}{c|cccccccccccccccc}
$(k, m)$ & 1 & 2 & 3 & 4 & 5 & 6 & 7 & 8 & 9 & 10 & 11 & 12 & 13 & 14 & 15 & 16\\
\hline
1 & 1& 2& 3& 4& 6& 6& 8& 8& 9& 12& 12& 12& 14& 16& 18& 16\\
2 & & & & & & 1& 2& 4& 6& 8& 12& 18& 22& 28& 36& 48\\
3 & && & & & & & & & & & & 1& 2& 3& 4

\end{tabular}

\end{enumerate}
\end{appendices}

\bibliographystyle{alpha}
\bibliography{cheb}

\begin{thebibliography}{Mac86}

\bibitem[AR13]{andrews_rose}
George~E. Andrews and Simon~C.F. Rose.
\newblock Mac{M}ahon's sum-of-divisors functions, {C}hebyshev polynomials, and
  quasi-modular forms.
\newblock {\em J. Reine Angew. Math.}, 676:97--103, 2013.

\bibitem[EZ85]{eichler_zagier}
Martin Eichler and Don Zagier.
\newblock {\em The Theory of {J}acobi Forms}, volume~55 of {\em Progress in
  mathematics}.
\newblock Birkh{\"a}user, 1985.

\bibitem[Lar15]{larson}
Hannah Larson.
\newblock Proof of conjecture regarding the level of {R}ose's generalized
  sum-of-divisors functions, 2015.
\newblock arXiv/1010.5769.

\bibitem[Mac86]{macmahon}
P.~A. MacMahon.
\newblock Divisors of numbers and their continuations in the theory of
  partitions.
\newblock In G.~Andrews, editor, {\em Reprinted: Percy A. MacMahon Collected
  Papers}, pages 305--341. MIT Press, Cambridge, 1986.

\bibitem[Ros14]{rose_paper}
Simon Rose.
\newblock Counting hyperelliptic curves on an {A}belian surface with
  quasi-modular forms.
\newblock {\em Commun. Number Theory Phys.}, 8(2):243--293, 2014.

\end{thebibliography}

\end{document}